\documentclass[twoside,12pt]{amsart}
\usepackage{amssymb}
\usepackage{amsmath}
\usepackage{fancyhdr}
\usepackage{amsthm}
\usepackage{rotate}
\usepackage{graphicx}
\usepackage[T1]{fontenc}
\usepackage{afterpage}  %

\newtheorem{num}{Notation}[section]

\newtheorem{proposition}[num]{Proposition}

\newtheorem{lemma}[num]{Lemma}
\newtheorem{corollary}[num]{Corollary}

\newtheorem{conjecture}[num]{Conjecture}

\usepackage{lscape}
\usepackage[all]{xy}
\usepackage{pifont}
\usepackage{bbding}
\usepackage{url}

\def\SL{ \text{\rm SL} }
\def\PSL{ \text{\rm PSL} }
\def\Hom{ \text{\rm Hom} }

\begin{document}
\date{\today}
\title{A note on invariant transversals for normal subgroups}

\author{Gerhard Hiss}

\address{Lehrstuhl f\"ur Algebra und Zahlentheorie, RWTH Aachen University,
52056 Aach\-en, Germany}

\email{gerhard.hiss@math.rwth-aachen.de}

\subjclass[2020]{Primary: 20E45; Secondary: 20C25, 20N05}
\keywords{Transversals, normal subgroups, conjugacy classes, commutators}

\begin{abstract}
The existence of invariant transversals for a normal subgroup~$H$ in a group~$G$
is investigated. This yields counterexamples to a conjecture in case~$H$ is 
abelian and $G$ is finite. 
\end{abstract}

\maketitle

%

\section{Introduction}

Let~$G$ be a group and let $H, L \leq G$ be subgroups of~$G$. Let
$H\backslash G := \{ Hg \mid g \in G \}$ denote the set of right $H$-cosets 
in~$G$. A transversal for $H\backslash G$ is a set of representatives. A 
subset~$S$ of~$G$ is called $L$-invariant, if~$S$ is invariant under 
conjugation by~$L$ or, in other words, if~$S$ is a union of $L$-conjugacy 
classes of~$G$. If there exists an $L$-invariant transversal 
for~$H\backslash G$, we say that~$H$ admits an $L$-invariant transversal. A 
$G$-invariant transversal for $H\backslash G$ is, in particular, a loop 
transversal, i.e., a transversal for the cosets of $g^{-1}Hg\backslash G$ for 
all $g \in G$. Loop transversals carry the structure of a loop.

In~\cite{HO}, the authors put forward the following conjecture, where~$G'$
denotes the commutator subgroup of~$G$. 

\begin{conjecture}
\label{Conjecture1}
Let~$G$ be finite and let $H \leq G$ be abelian admitting a $G$-invariant 
transversal. Then $H \cap G' = \{ 1 \}$.
\end{conjecture}

This conjecture, for which the author of this note takes full responsibility, 
already appears in Ortjohann's Master thesis~\cite{LO}. What was the motivation 
for this conjecture? Firstly, this was the observation that if 
$H \cap G' = \{ 1 \}$ (in which case~$H$ is necessarily abelian), then any
transversal~$S$ of $HG'\backslash G$ yields the $G$-invariant transversal
$T = \{ gs \mid g \in G', s \in S \}$ for $H\backslash G$. Thus the truth of 
Conjecture~\ref{Conjecture1} would have simplified the classification of the 
instances $(G,H,T)$ 
with~$H$ abelian and~$T$ a $G$-invariant transversal for $H\backslash G$. 
Secondly, Artic in her PhD-thesis \cite{articdiss} determined the pairs 
$(G,H)$ with~$G$ finite, $[G\colon\!H] \leq 30$ and core-free~$H$, admitting a 
$G$-invariant transversal. All of Artic's examples satisfy 
Conjecture~\ref{Conjecture1}. In her Master thesis~\cite{LO}, Ortjohann verified 
the conjecture for all groups~$G$ of order at most~$40$. Finally, 
\cite[Theorem~$3.1$]{HO} reproduces a positive result of Kochend{\"o}rffer and 
Zappa, in case~$H$ is an abelian Hall subgroup of~$G$.

The main purpose of this note is to exhibit counterexamples to 
Conjecture~\ref{Conjecture1}. We came across these in our attempt to prove the
conjecture for abelian normal subgroups~$H$. 
There is a second reason 
for looking at normal subgroups. Suppose that~$T$ is a $G$-invariant transversal 
for $H\backslash G$ and that~$L$ is a subgroup of~$G$ with $H \leq L$. Then 
$T \cap L$ is an $L$-invariant transversal for $H\backslash L$. This reasoning,
which does not assume that~$H$ is abelian, applies in particular when~$L$ is the 
normalizer of~$H$ in~$G$.

In Section~\ref{Characterizations}, not assuming that the groups are 
finite, we give a straightforward criterion for the existence of invariant 
transversals for normal subgroups. In Section~\ref{FiniteGroups} we discuss 
finite groups, where the conjecture leads to a question on central extensions 
and corresponding projective representations. Fortunately, this question had 
been answered a long time ago. The answer yields counterexamples to 
Conjecture~\ref{Conjecture1}. Gunter Malle called the attention of the author
to the results in~\cite{commut} and, via his survey article~\cite{MalleOre},
to earlier results~\cite{blau} by Blau. The latter leads to two non-solvable
counterexamples.

\section{Characterizations}
\label{Characterizations}

In this section~$G$ is a group and $H \unlhd G$ is a normal subgroup of~$G$. 
Our notation is standard. In particular, $Z(G)$ and $G'$ denote, respectively, 
the center of~$G$ and the commutator subgroup of~$G$. For $g, x \in G$, we put 
$g^x := x^{-1}gx$ and $[g,x] := g^{-1}x^{-1}gx$. Also, $C_G( x )$ and $C_G( L )$
denote the centralizer in~$G$ of $x \in G$, respectively the 
subgroup~$L \leq G$. Since $H \unlhd G$, we have $H\backslash G = G/H$, and we 
write $\pi\colon G \rightarrow G/H$ for the canonical epimorphism.

\begin{lemma}
\label{CentralizerLem}
Let $T$ be an $H$-invariant transversal for $G/H$. 
Then $\langle T \rangle \leq C_G( H )$ and $G = H \langle T \rangle = HC_G(H)$.
\end{lemma}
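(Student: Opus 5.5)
The plan is to show directly that every element of $T$ centralizes~$H$; the remaining claims then follow formally. So fix $t \in T$ and $h \in H$. Since~$T$ is $H$-invariant, $t^h = h^{-1}th \in T$. Because $H \unlhd G$, we can write
\[
t^h = h^{-1}th = t \cdot (t^{-1}h^{-1}t)h \in tH = Ht,
\]
so $t^h$ and~$t$ lie in the same coset of~$H$. Both are elements of the transversal~$T$, which contains exactly one representative of each coset. Hence $t^h = t$, that is, $h$ commutes with~$t$.

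As $h \in H$ was arbitrary, $t \in C_G(H)$. This holds for every $t \in T$, and $C_G(H)$ is a subgroup, so $\langle T \rangle \leq C_G(H)$.

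For the equalities, note that~$T$ meets every coset of~$H$. Thus every $g \in G$ has the form $g = ht$ with $h \in H$ and $t \in T$, which gives $G = HT \subseteq H\langle T\rangle$. Using $\langle T \rangle \leq C_G(H)$ we then get the chain
\[
G = H\langle T\rangle \subseteq HC_G(H) \subseteq G,
\]
so all three sets coincide. Here $HC_G(H)$ is indeed a subgroup, because~$H$ is normal.

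I do not expect a genuine obstacle. The only point to handle carefully is the uniqueness argument: normality of~$H$ is exactly what guarantees that conjugating~$t$ by an element of~$H$ keeps it in its own coset $tH = Ht$.
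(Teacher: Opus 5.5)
Your proof is correct and follows essentially the same route as the paper: normality of $H$ puts $t^h$ in the same coset as $t$, the transversal property forces $t^h = t$, and $G = HT$ then gives $G = H\langle T\rangle = HC_G(H)$. Your version simply writes out the coset computation and the containment chain more explicitly than the paper does.
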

\begin{proof}
Since $H \unlhd G$, we have $Hg = Hg^h$ for all $g \in G$ and $h \in H$. Since 
$T$ is $H$-invariant, this yields $t = t^h$ for all $t \in T$ and all $h \in H$,
and thus our first assertion.

From this and $G = \langle H, T \rangle$, we obtain 
$G = H \langle T \rangle = HC_G(H)$.
\end{proof}

\begin{corollary}
\label{CentralizerCor}
Let~$H$ be abelian. Then~$H$ admits an $H$-invariant transversal, if and only if 
$H \leq Z(G)$. 
\end{corollary}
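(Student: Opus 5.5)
We prove the two implications separately. The forward direction follows from Lemma~\ref{CentralizerLem}, and the converse is a direct construction.

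\emph{Only if.} Suppose~$T$ is an $H$-invariant transversal for $G/H$. By Lemma~\ref{CentralizerLem} we have $G = HC_G(H)$. Since~$H$ is abelian, $H \leq C_G(H)$, so $G = HC_G(H) = C_G(H)$. Hence every element of~$G$ centralizes~$H$, that is, $H \leq Z(G)$.

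\emph{If.} Suppose $H \leq Z(G)$. Choose an arbitrary transversal~$T$ for $G/H$. We claim~$T$ is already $H$-invariant, i.e.\ $t^h = t$ for all $t \in T$ and $h \in H$. This holds because~$h$ is central, so conjugation by~$h$ is the identity on~$G$. Thus every subset of~$G$, in particular~$T$, is a union of $H$-conjugacy classes.

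There is no real obstacle. The only point to notice is that the hypothesis that~$H$ is abelian is used exactly once: it gives $H \leq C_G(H)$, which collapses $HC_G(H)$ to $C_G(H)$. The converse needs only centrality of~$H$; abelianness there is automatic.
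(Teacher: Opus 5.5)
Your proof is correct and is essentially the paper's own argument. For the forward direction you combine $G = HC_G(H)$ from Lemma~\ref{CentralizerLem} with $H \leq C_G(H)$ to get $C_G(H) = G$; for the converse you observe that every transversal is $H$-invariant when $H$ is central.
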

\begin{proof}
Suppose first that~$H$ admits an $H$-invariant transversal. Since~$H$ is 
abelian, $H \leq C_G( H )$ and so $C_G( H ) = HC_G( H ) = G$ by
Lemma~\ref{CentralizerLem}. Thus $H \leq Z(G)$. 

Conversely, if $H \leq Z( G )$, every transversal for $G/H$ is $H$-invariant.
This concludes the proof.
\end{proof}

Let $T \subseteq G$. By definition,~$T$ is a transversal for~$G/H$, if and only
if~$\pi$ restricts to a bijection $\pi|_T\colon T \rightarrow G/H$. If~$T$ is a
$G$-invariant transversal for~$G/H$, then~$\pi|_T$ induces a bijection between 
the conjugacy classes of~$G$ contained in~$T$ and the conjugacy classes 
of~$G/H$. The existence of $G$-invariant transversals for normal subgroups is 
characterized by the following result.

\begin{proposition}
\label{CharacterizingGInvariance}
There is a $G$-invariant transversal for $G/H$, if and only if
\begin{equation}
\label{FactorizationOfG}
G = HC_G(H) 
\end{equation}
and
\begin{equation}
\label{CentralizerEquation}
C_{G/H}( Hg ) = HC_G( g )/H\quad\quad\text{for all\ } g \in G.
\end{equation}
\end{proposition}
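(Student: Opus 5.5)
For the forward direction, I let $T$ be a $G$-invariant transversal for $G/H$. It is in particular $H$-invariant, so Lemma~\ref{CentralizerLem} gives \eqref{FactorizationOfG}. For \eqref{CentralizerEquation}, I fix $g \in G$. Replacing $g$ by the element $t \in T$ with $Ht = Hg$ is harmless, because $g = ht$ with $h\in H$ and then $HC_G(g)$ should equal $HC_G(t)$. This last point needs checking, and I expect it to come for free once the statement is proved for all elements of $T$ (see below).

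So let $t \in T$. The inclusion $HC_G(t)/H \subseteq C_{G/H}(Ht)$ is clear. Conversely, let $Hx \in C_{G/H}(Ht)$. Then $Ht^x = Ht$, and $t^x \in T$ by invariance. Since $T$ is a transversal, $t^x = t$, so $x \in C_G(t)$. Hence $C_{G/H}(Ht) = HC_G(t)/H$.

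For general $g = ht$ with $h \in H$, I would use \eqref{FactorizationOfG}. Given $Hx$ centralizing $Hg$, the previous paragraph gives $Hx = Hc$ for some $c\in C_G(t)$. I need to modify $c$ by an element of $H$ so that it centralizes $ht$. Here I use that $T$ lies in $C_G(H)$ and that $G=HC_G(H)$. Since $t$ centralizes $H$, an element $c'=kc$ with $k\in H$ centralizes $ht$ if and only if $(h^{-1})^{kc}\,h$ commutes appropriately. A cleaner way is to write $c = h_1 z$ with $h_1 \in H$ and $z \in C_G(H)$, so that $z$ also centralizes $t$. Then $(ht)^z = ht$ because $z$ centralizes both $h$ and $t$, and $Hz = Hc = Hx$. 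Therefore $HC_G(g)\supseteq$ the preimage of $C_{G/H}(Hg)$, which proves \eqref{CentralizerEquation}.

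For the converse, I assume \eqref{FactorizationOfG} and \eqref{CentralizerEquation}. I choose representatives $Hg_i$ of the conjugacy classes of $G/H$, and by \eqref{FactorizationOfG} I may take $g_i \in C_G(H)$. I then set $T := \bigcup_i g_i^G$, which is $G$-invariant, and need to show that $\pi|_T$ is a bijection. Surjectivity holds because $\pi(g_i^G)$ is the class of $Hg_i$, and these classes cover $G/H$. For injectivity, distinct $i$ map to distinct classes. Within one class, if $Hg_i^x = Hg_i^y$ then $Hxy^{-1}\in C_{G/H}(Hg_i)$, so by \eqref{CentralizerEquation} $xy^{-1} = hc$ with $h \in H$ and $c \in C_G(g_i)$. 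Then $g_i^{x}=g_i^{hcy}= g_i^{cy}=g_i^y$, using that $g_i$ centralizes $H$. This is exactly where the choice $g_i\in C_G(H)$ is needed, and it is the main subtlety of the whole argument; the rest is routine.
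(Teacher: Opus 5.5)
Your proof is correct and is essentially the paper's argument. The converse is the same construction (class representatives chosen in $C_G(H)$ via~\eqref{FactorizationOfG}, $T$ the union of their $G$-classes, with~\eqref{CentralizerEquation} and $g_i \in C_G(H)$ giving injectivity on each class), only spelled out in more detail; in the forward direction the paper writes $x = h't'$ with $t' \in T$ from the start and shows $t'$ centralizes $g$, whereas you first obtain $c \in C_G(t)$ with $Hc = Hx$ and then pass to its $C_G(H)$-component $z$, which uses the same ingredients ($G$-invariance plus the transversal property, and $T \subseteq C_G(H)$) in a different order.
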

\begin{proof}
Suppose first that~$T$ is a $G$-invariant transversal for~$G/H$. Then 
$G = HC_G(H)$ by Lemma~\ref{CentralizerLem}, so~(\ref{FactorizationOfG}) holds.

To prove~(\ref{CentralizerEquation}), let $g \in G$. It suffices to show that
$C_{G/H}( Hg ) \leq HC_G( g )/H$. For this purpose, let $x \in G$ with $[x,g] 
\in H$. Write $g = ht$ and $x = h't'$ with $h, h' \in H$ and $t, t' \in T$. 
Since $t, t' \in C_G( H )$, once more by Lemma~\ref{CentralizerLem}, we obtain
$[x,g] = [t',t][h',h]$, and so $[t',t] \in H$. It follows that 
$Ht = tH = t^{t'}H = Ht^{t'}$. Since~$T$ is $G$-invariant, we conclude that~$t'$
centralizes~$t$, and thus~$g$. Hence $x \in HC_G( g )$, as claimed.

Now suppose that~(\ref{FactorizationOfG}) and~(\ref{CentralizerEquation}) are
satisfied. For $g \in G$, let $C_g$ denote the $G$-conjugacy class of~$g$. 
Choose a 
transversal~$S$ for $G/H$ with $S \subseteq C_G(H)$. For $s \in S$ we have 
$H \leq C_G( s )$, and thus $\pi( C_G( s ) ) = C_{G/H}( \pi(s) )$ 
from~(\ref{CentralizerEquation}). It follows that~$\pi$ restricts to a bijection 
between~$C_s$ and the $G/H$-conjugacy class of~$\pi(s)$.

Let $\overline{\mathcal{C}}$ denote the set of conjugacy classes of $G/H$. For 
every $\bar{C} \in \overline{\mathcal{C}}$, choose $s({\bar{C}}) \in S$ with 
$\pi( s({\bar{C}}) ) \in \bar{C}$. 
Then 
$$
T := \bigcup_{\bar{C} \in \overline{\mathcal{C}}} C_{s({\bar{C}})}
$$
is $G$-invariant. Moreover,~$T$ is a transversal for $G/H$, since~$\pi$ maps~$T$
bijectively onto~$G/H$.
\end{proof}

The next results prepares for a reduction.

\begin{lemma}
\label{ReductionToHAbelian}
Suppose that $G = H C_G(H)$ and let $T \subseteq G$. Then~$T$ is a $G$-invariant 
transversal for~$G/H$ if and only if $T \subseteq C_G( H )$ and $T$ is a 
$C_G(H)$-invariant transversal for $C_G(H)/Z(H)$.
\end{lemma}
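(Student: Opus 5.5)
Write $C := C_G(H)$. Since $H \unlhd G$, also $C \unlhd G$ and $H \cap C = Z(H)$. The plan has three ingredients: the isomorphism $C/Z(H) \cong G/H$ that comes from $G = HC$, Lemma~\ref{CentralizerLem} to force $T$ into $C$, and the observation that on subsets of $C$, $G$-invariance is the same as $C$-invariance. I prove the two implications separately.

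First I record the identification. The inclusion $C \to G$ followed by $\pi$ is surjective because $G = HC$, and its kernel is $C \cap H = Z(H)$. So $\pi$ induces an isomorphism $C/Z(H) \to G/H$, sending $Z(H)c$ to $Hc$. Consequently, a subset $T \subseteq C$ is a transversal for $C/Z(H)$ if and only if $\pi|_T$ is a bijection onto $G/H$, that is, if and only if $T$ is a transversal for $G/H$.

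Second, I compare invariance. Suppose $T \subseteq C$. Then $H$ centralizes every element of $T$, so conjugation by $H$ fixes $T$ pointwise. Since every element of $G$ has the form $hc$ with $h \in H$ and $c \in C$, and $t^{hc} = (t^h)^c = t^c$, the subset $T$ is $G$-invariant if and only if it is $C$-invariant.

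Now the two directions. For ($\Rightarrow$), let $T$ be a $G$-invariant transversal for $G/H$. It is in particular $H$-invariant, so Lemma~\ref{CentralizerLem} gives $T \subseteq \langle T \rangle \leq C$. By the second step $T$ is $C$-invariant, and by the first step it is a transversal for $C/Z(H)$. For ($\Leftarrow$), assume $T \subseteq C$ and $T$ is a $C$-invariant transversal for $C/Z(H)$. The first step makes $T$ a transversal for $G/H$, and the second step makes it $G$-invariant.

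I do not expect a real obstacle here. The only points that need care are checking $\ker(C \to G/H) = Z(H)$ and using the factorization $G = HC$ in both the surjectivity argument and the reduction of $G$-conjugation to $C$-conjugation.
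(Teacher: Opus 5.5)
Your proof is correct and follows the same route as the paper. Lemma~\ref{CentralizerLem} forces $T \subseteq C_G(H)$; the factorization $G = HC_G(H)$ together with $H \cap C_G(H) = Z(H)$ identifies transversals for $C_G(H)/Z(H)$ lying in $C_G(H)$ with transversals for $G/H$; and the fact that $H$ centralizes $C_G(H)$ turns $C_G(H)$-invariance into $G$-invariance. The only difference is that you write out the isomorphism $C_G(H)/Z(H) \cong G/H$, which the paper uses without stating it.
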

\begin{proof}
Suppose that~$T$ is a $G$-invariant transversal for $G/H$. Then 
$T \subseteq C_G(H)$ by Lemma~\ref{CentralizerLem}. Thus~$T$ is a 
$C_G( H )$-invariant transversal for $C_G(H)/(H\cap C_G(H))$. 

Now suppose that $T \subseteq C_G(H)$ is a $C_G( H )$-invariant transversal for 
$C_G(H)/Z(H)$. Then~$T$ is a transversal for~$G/H$. Moreover,~$T$ is 
$G$-invariant, since~$H$ centralizes~$C_G(H)$.
\end{proof}

Let~$T$ be a $G$-invariant transversal for $G/H$, and put $L := C_G( H )$. If 
$Z(H) = \{ 1 \}$ we obtain the uninteresting case $G = H \times L$. In general, 
$G = H \circ_Z L$ is a central product, where $Z = Z(H)$ is amalgamated with a 
central subgroup of~$L$, and $T \subseteq L$ is an $L$-invariant transversal 
for~$L/Z$. Thus the classification of the $G$-invariant transversals for~$G/H$ 
can be reduced to the case when~$H$ is abelian.

\begin{corollary}
\label{CharacterizingGInvarianceHAbelian}
Suppose that~$H$ is abelian. Then~$H$ admits a $G$-invariant transversal,
if and only if 
\begin{equation}
\label{FactorizationOfGHAbelian}
H \leq Z(G) 
\end{equation}
and 
\begin{equation}
\label{CentralizerEquationHAbelian}
C_{G/H}( Hg ) = C_G( g )/H\quad\quad \text{for all\ } g \in G.
\end{equation}
\end{corollary}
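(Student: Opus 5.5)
This is a direct specialization of Proposition~\ref{CharacterizingGInvariance} to abelian $H$. The plan is to show that, under the standing assumption that $H$ is abelian, condition~(\ref{FactorizationOfG}) is equivalent to~(\ref{FactorizationOfGHAbelian}). Once that holds, condition~(\ref{CentralizerEquation}) can be rewritten as~(\ref{CentralizerEquationHAbelian}).

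First, suppose $H$ is abelian. Then $H \leq C_G(H)$, so $HC_G(H) = C_G(H)$. Hence $G = HC_G(H)$ holds if and only if $C_G(H) = G$, that is, if and only if $H \leq Z(G)$. This is the same computation as in the proof of Corollary~\ref{CentralizerCor}.

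Second, assume $H \leq Z(G)$. Then $H \leq C_G(g)$ for every $g \in G$, so $HC_G(g) = C_G(g)$. The quotient $HC_G(g)/H$ is therefore just $C_G(g)/H$. Consequently, once (\ref{FactorizationOfGHAbelian}) is in force, (\ref{CentralizerEquation}) reads exactly as~(\ref{CentralizerEquationHAbelian}).

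Combining the two steps: by Proposition~\ref{CharacterizingGInvariance}, $H$ admits a $G$-invariant transversal if and only if (\ref{FactorizationOfG}) and (\ref{CentralizerEquation}) hold. By the two steps above, this is equivalent to (\ref{FactorizationOfGHAbelian}) together with (\ref{CentralizerEquationHAbelian}). There is no real obstacle. The only point needing care is that the quotient $C_G(g)/H$ in (\ref{CentralizerEquationHAbelian}) only makes sense once $H \leq C_G(g)$, and this is guaranteed by (\ref{FactorizationOfGHAbelian}), which appears in the statement before it.
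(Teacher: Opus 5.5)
Your proposal is correct and follows essentially the same route as the paper: with $H$ abelian, $H \leq C_G(H)$ makes~(\ref{FactorizationOfG}) equivalent to~(\ref{FactorizationOfGHAbelian}), and under $H \leq Z(G)$ one has $HC_G(g) = C_G(g)$, so~(\ref{CentralizerEquation}) becomes~(\ref{CentralizerEquationHAbelian}). Your remark that $C_G(g)/H$ only makes sense once $H \leq C_G(g)$, which~(\ref{FactorizationOfGHAbelian}) guarantees, is a sound point of care that the paper leaves implicit.
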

\begin{proof}
If~$H$ is abelian, $H \leq C_G( H )$, and so Condition~(\ref{FactorizationOfG})
is equivalent to~(\ref{FactorizationOfGHAbelian}). Moreover, if $H \leq Z(G)$, 
then $H \leq C_G( g )$ for all $g \in G$, so that 
Condition~(\ref{CentralizerEquation}) is equivalent 
to~(\ref{CentralizerEquationHAbelian}).
\end{proof}

Notice that if $H \leq Z(G)$, then Condition~(\ref{CentralizerEquationHAbelian}) 
is equivalent to the statement that no non-trivial commutator of~$G$ lies 
in~$H$. The latter condition holds for $G = \SL_2( \mathbb{R} )$ with
$H = Z( G )$. Since $\SL_2( \mathbb{R} )$ is perfect, this yields an infinite
``counterexample'' to Conjecture~\ref{Conjecture1}. In contrast, by a theorem of 
Ree~\cite{ree}, every element in a semisimple algebraic group over an 
algebraically closed field is a commutator. With a finite number of exceptions,
the same is true for every finite quasisimple group by the work of Liebeck,
O'Brien, Shalev and Tiep~\cite[Theorem~$1.1$]{commut}. (A finite group is 
quasisimple, if it is perfect and simple modulo its center.) As will be 
discussed at the end this note, two of these exceptions, already contained in 
the work of Blau~\cite{blau}, give rise to non-solvable counterexamples to 
Conjecture~\ref{Conjecture1}. 

\section{Finite groups and $G$-invariant transversals}
\label{FiniteGroups}

In this section~$G$ is a finite group and $H \unlhd G$. For a finite group~$L$,
let $k(L)$ denote the number of conjugacy classes of~$L$. In~\cite{gallagher}, 
Gallagher has shown that Condition~(\ref{CentralizerEquation}) is equivalent to
\begin{equation}
\label{GallagherRelation}
k(G) = k(G/H)\cdot k(H).
\end{equation}
The article of Gallagher also contains a character theoretic interpretation
of this relation.
A search with GAP~\cite{GAP4} using~(\ref{GallagherRelation}), discloses a 
group~$G$ of order~$64$ with a subgroup~$H$ of index~$2$ such that~$(G,H)$ 
satisfies~(\ref{CentralizerEquation}) but not~(\ref{FactorizationOfG}). If~$G$ 
is the dihedral group of order~$8$ and $H = Z(G)$, then $(G,H)$
satisfies~(\ref{FactorizationOfG}) but not~(\ref{CentralizerEquation}). Thus 
neither of these conditions implies the other one.

From now on, we write $\bar{G} := G/H$ and use the bar-convention to denote the
elements of~$\bar{G}$, i.e., $\bar{g} \in \bar{G}$ is the image of $g \in G$ 
under the canonical epimorphism $G \rightarrow \bar{G}$. The following 
reasoning, based on \cite{reynolds}, leads the way to counterexamples to 
Conjecture~\ref{Conjecture1}.

Suppose that $H \leq Z(G)$. Thus
$$1 \longrightarrow H \longrightarrow G \longrightarrow \bar{G} 
\longrightarrow 1$$
is a central extension of~$\bar{G}$ with kernel~$H$. 
Let 
$T = \{ t_{\bar{x}} \mid \bar{x} \in \bar{G} \}$
be a transversal for~$G/H$. From~$T$ we obtain a $2$-cocycle
$\gamma \in Z^2( \bar{G}, H )$ by the rule
$$t_{\bar{x}} t_{\bar{y}} = \gamma( \bar{x}, \bar{y} ) t_{\bar{x}\bar{y}}$$
for $\bar{x}, \bar{y} \in \bar{G}$. Let 
$\lambda \in \Hom( H, \mathbb{C}^{\times} )$. Then 
$\alpha := \lambda \circ \gamma$ is a $2$-cocycle in 
$Z^2( \bar{G}, \mathbb{C}^{\times} )$, and we write $[\alpha]$ 
for its image in $H^2( \bar{G}, \mathbb{C}^{\times} )$. Then~$[\alpha]$
is non-trivial, if and only if $H \cap Z( G' ) \not\leq \ker( \lambda )$; see 
\cite[Theorem~$(11.19)$]{Isaacs}. By definition, $\bar{x} \in \bar{G}$ is called
$\alpha$-regular, if
$\alpha( \bar{x}, \bar{y} ) = \alpha( \bar{y}, \bar{x} )$ for all
$\bar{y} \in C_{\bar{G}}( \bar{x} )$. 

Here is the link to invariant transversals. By the definition of~$\gamma$, the 
centralizer condition~(\ref{CentralizerEquationHAbelian}) is satisfied 
for~$(G,H)$, if and only if 
\begin{equation}
\label{gammaEquation}
\gamma( \bar{x}, \bar{y} ) = \gamma( \bar{y}, \bar{x} )\quad\text{for all\ }
\bar{x} \in \bar{G} \text{\ and all\ } \bar{y} \in C_{\bar{G}}( \bar{x} ).
\end{equation}
If~$\lambda$ is faithful,~(\ref{gammaEquation}) is equivalent to the condition 
that every $\bar{x} \in \bar{G}$ is $\alpha$-regular. If this holds, the number 
of irreducible complex characters of~$G$ which restrict to~$H$ as a multiple 
of~$\lambda$ equals the number of irreducible complex characters of~$\bar{G}$.
Can this happen if $[\alpha]$ is non-trivial? This is the question addressed by 
Reynolds in~\cite{reynolds}.

Let~$p$ be a prime. In \cite[\S$3$]{reynolds}, Reynolds constructs a metabelian 
group~$G$ of order~$p^8$ with a subgroup $H \leq Z(G) \cap G'$ of order~$p$, 
such that, for every $\lambda \in \Hom( H, \mathbb{C}^{\times} )$, every element 
of~$\bar{G}$ is $\alpha$-regular. It follows that the pair $(G,H)$ is a 
counterexample to Conjecture~\ref{Conjecture1}. The final section of Gallagher's 
article~\cite{gallagher} contains further counterexamples, which the author 
attributes to Dade. 

A computation with GAP shows that there are no counterexamples $(G,H)$ to 
Conjecture~\ref{Conjecture1} with $|G| < 128$ and $H \leq Z (G )$. On the other 
hand, we found~$52$ isomorphism classes of groups~$G$ of order~$2^7$, containing 
a subgroup $H \leq Z(G) \cap G'$ of order~$2$ such that
\begin{equation}
\label{GallagherSpecial}
k( G ) = k( G/H )\cdot 2.
\end{equation}
Thus~(\ref{GallagherRelation}), and hence~(\ref{CentralizerEquationHAbelian}) is 
satisfied for~$(G,H)$, so that these groups yield counterexamples as well. 
Let~$G$ be one of these groups. Then $Z( G ) \cong G/G'$ is elementary
abelian of order~$8$, $Z( G ) \leq G'$,~$G'$ is abelian
of exponent~$4$, and~$Z(G)$ contains a unique subgroup~$H$ of order~$2$ 
satisfying~(\ref{GallagherSpecial}).

We conclude this note with a going down result. 
\begin{lemma}
\label{MinimalCounterexample}
Let $H \leq Z(G)$. Suppose that~$p$ is a prime dividing $|H \cap G'|$. Let~$P$ 
be a Sylow $p$-subgroup of~$G$ and let $Q \leq H \cap G'$ be of order~$p$. Then 
$Q \leq P'$.

If~$H$ admits a $G$-invariant transversal, then~$Q$~admits a $P$-invariant 
transversal. 
\end{lemma}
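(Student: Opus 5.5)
The plan is to prove the two assertions separately: the first by a transfer argument, the second by applying Corollary~\ref{CharacterizingGInvarianceHAbelian} twice.

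\emph{Step 1: $Q \leq P'$.} I use the transfer $V\colon G \rightarrow P/P'$.
\begin{itemize}
\item Since $Q \leq H \leq Z(G)$, the subgroup $Q$ is central in~$G$. As $|Q| = p$, it lies in some Sylow $p$-subgroup of~$G$. Being normal, it lies in every Sylow $p$-subgroup, so $Q \leq P$.
\item Let $z$ be a generator of~$Q$ and put $n := [G\colon\!P]$. The standard evaluation of the transfer at a central element gives $V(z) = z^{n}P'$. To see this, use the transfer evaluation lemma: for each cycle of $\langle z\rangle$ on $G/P$ with representative~$g$ and length~$\ell$, one gets the factor $g z^{\ell} g^{-1} = z^{\ell}$. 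Since~$z$ is central, these factors multiply to $z^{n}$.
\item The target $P/P'$ is abelian, so $V$ is trivial on~$G'$. Since $z \in G'$, this gives $z^{n} \in P'$.
\item As $p \nmid n$, the map $z \mapsto z^{n}$ is a bijection on~$Q$. Hence $z \in P'$, that is, $Q \leq P'$.
\end{itemize}

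\emph{Step 2: transfer of invariant transversals.} Suppose $H$ admits a $G$-invariant transversal.
\begin{itemize}
\item By Corollary~\ref{CharacterizingGInvarianceHAbelian}, $H \leq Z(G)$ and condition~(\ref{CentralizerEquationHAbelian}) holds for $(G,H)$.
\item As noted after that corollary, for central~$H$ this condition says exactly that no non-trivial commutator $[x,g]$ with $x,g \in G$ lies in~$H$. Indeed, $C_{G/H}(Hg)$ consists of the cosets $Hx$ with $[x,g] \in H$, and these coincide with $C_G(g)/H$ precisely when such commutators are trivial.
\item Now pass to the pair $(P,Q)$. We have $Q \leq Z(G) \cap P \leq Z(P)$.
\item Every commutator of elements of~$P$ is a commutator of~$G$, and $Q \leq H$. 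Hence no non-trivial commutator of~$P$ lies in~$Q$. Thus condition~(\ref{CentralizerEquationHAbelian}) holds for $(P,Q)$.
\item Applying Corollary~\ref{CharacterizingGInvarianceHAbelian} to the abelian normal subgroup $Q$ of~$P$ yields a $P$-invariant transversal for $P/Q$.
\end{itemize}

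The only step needing care is the transfer computation $V(z)=z^{[G:P]}P'$ for central~$z$. This is standard, and the focal subgroup theorem gives an alternative route to the same conclusion. Everything else reduces directly to Corollary~\ref{CharacterizingGInvarianceHAbelian}.
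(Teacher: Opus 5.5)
Your proof is correct.

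\textbf{First assertion.} You follow the paper here. The paper simply cites a transfer result from Huppert (Satz IV.2.2), while you supply the argument yourself. Your evaluation $V(z)=z^{[G:P]}P'$ for central $z$, together with $V(G')=1$ and $p \nmid [G:P]$, is sound.

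\textbf{Second assertion.} Your route differs slightly from the paper's. The paper proceeds in two steps:
\begin{enumerate}
\item It first passes from $H$ to $Q$ inside $G$, obtaining a $G$-invariant transversal for $G/Q$ because $Q \leq H \leq Z(G)$.
\item It then intersects this transversal with $P$. This uses the observation from the introduction that $T \cap L$ is an $L$-invariant transversal whenever the subgroup in question lies in $L$. That step needs no centrality or normality at all.
\end{enumerate}
You instead verify the commutator criterion stated after Corollary~\ref{CharacterizingGInvarianceHAbelian} directly for $(P,Q)$, and apply the corollary again.

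\textbf{Comparison.} The paper's version builds the new transversal concretely from the given one. Yours merges both reductions into a single check. It also makes clear why the property descends: ``no non-trivial commutator lies in the central subgroup'' is obviously inherited by smaller central subgroups, and by subgroups of $G$ containing them.
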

\begin{proof}
The first assertion follows from a transfer result; see 
\cite[Satz IV.$2.2$]{huppert}.

If~$H$ admits a $G$-invariant transversal, there is a $G$-invariant transversal 
for~$G\backslash Q$, since $Q \leq H \leq Z(G)$. Intersecting this with~$P$, we
obtain a $P$-invariant transversal for $Q\backslash P$.
\end{proof}

In the following,~$C_n$ denotes a a cyclic group of order~$n$. Let~$G$ be 
quasisimple and let $\{ 1\} \lneq H \leq Z(G)$ be such that no non-trivial 
element of~$H$ is a commutator. Then $G \in \{ G_1, G_2 \}$, where~$G_i$ is a 
covering group of $\PSL_3(4)$ for $i = 1, 2$, $Z( G_1 ) \cong C_2 \times C_{12}$
and $Z( G_2 ) \cong C_2 \times C_{4}$; moreover, $|H| = 2$ 
(see~\cite[Theorem~$1$]{blau} or \cite[Theorem~$1.1$ and Table~$1$]{commut}). 
Up to isomorphism, the covering groups~$G_1$ and~$G_2$ of $\PSL_3(4)$ are 
determined by their centers. For~$G_1$, this follows from the fact that there
is an automorphism of the universal covering group of $\PSL_3(4)$ permuting the
three involutions in its center transitively; see~\cite[Table~$6.3.1$]{GLS}.
For the uniqueness of~$G_2$ see \cite[Theorem~$6.3.2$]{GLS}. 
Using GAP~\cite{GAP4}, one can show that $Z( G_i )$ 
contains a unique subgroup~$H_i$ of order~$2$ with the given property, 
$i = 1, 2$. Up to isomorphism, we thus obtain two non-solvable counterexamples 
$(G_1,H_1)$ and $(G_2,H_2)$ to Conjecture~\ref{Conjecture1}. Let~$S$ be a Sylow 
$2$-subgroup of~$G_1$. Then $|S| = 2^9$ and~$S$ is isomorphic to a Sylow 
$2$-subgroup of~$G_2$. By Lemma~\ref{MinimalCounterexample}, the pair $(S,H_1)$ 
also yields a counterexample.

\section*{Acknowledgements}
It is my pleasure to thank Xenia Flamm for her comments, which greatly
improved the exposition of this note. I also thank Gunter Malle for taking my
attention to References~\cite{commut} and~\cite{MalleOre}. Finally, I thank
Thomas Breuer for his help with the covering groups of $\PSL_3(4)$.

\end{document}